\documentclass[12pt]{article}

\usepackage{amsmath, amssymb, amsthm}

\theoremstyle{plain}
\newtheorem{theorem}{Theorem}
\newtheorem{lemma}[theorem]{Lemma}
\newtheorem{corollary}[theorem]{Corollary}
\usepackage{fullpage}

\DeclareMathOperator{\ex}{ex}
\DeclareMathOperator{\exs}{exs}
\def\modd#1 #2{#1\ \mbox{{\rm (mod}} \ #2\mbox{\rm )}}
\def\nodiv{{|\kern-3.5pt/}}
\def\Zee{{\mathbb{Z}}}

\begin{document}

\title{A Class of Exponential Sequences with Shift-Invariant Discriminators}

\author{Sajed Haque and Jeffrey Shallit\\
School of Computer Science \\
University of Waterloo \\
Waterloo, ON  N2L 3G1 \\
Canada \\
{\tt s24haque@cs.uwaterloo.ca} \\
{\tt shallit@cs.uwaterloo.ca} 
}

\date{\today}

\maketitle

\begin{abstract}
The discriminator of an integer sequence $\textbf{s} = (s(i))_{i \geq 0}$, introduced by Arnold, Benkoski, and McCabe in 1985, is the function $D_{\textbf{s}} (n)$ that sends $n$ to the least integer $m$ such that the numbers $s(0), s(1), \ldots, s(n - 1)$ are pairwise incongruent modulo $m$.  In this note we present a class of exponential sequences that have the special property that their discriminators are shift-invariant, i.e., that the discriminator of the sequence is the same even if the sequence is shifted by any positive constant.
\end{abstract}

\section{Discriminators}
\label{intro}

Let $m$ be a positive integer.  If $S$ is a set of integers that are pairwise incongruent modulo $m$, we say that $m$ \textit{discriminates} $S$. Now let $\textbf{s} = (s(i))_{i \geq 0}$ be a
sequence of distinct integers. For all integers $n \geq 1$, we define $D_\textbf{s} (n)$ to be the least positive integer $m$ that discriminates the set $\{s(0), s(1), \ldots, s(n - 1)\}$.  The function $D_\textbf{s} (n)$ is called the \textit{discriminator} of the sequence \textbf{s}. 

The discriminator was first introduced by Arnold, Benkoski, and McCabe \cite{ABM85}. They derived the discriminator for the sequence $1, 4, 9, \ldots$ of positive integer squares. More recently, discriminators of various sequences were studied by Schumer and Steinig \cite{SchumerSteinig88}, Barcau \cite{Barcau88}, Schumer \cite{Schumer90}, Bremser, Schumer, and Washington \cite{BSW90},
Moree and Roskam \cite{MoreeRoskam95}, Moree \cite{Moree96}, Moree and Mullen \cite{MoreeMullen96}, Zieve \cite{Zieve98}, Sun \cite{Sun13},
Moree and Zumalac\'arrequi \cite{MoreeZumalacarregui16}, and Haque and Shallit \cite{HaqueShallit16}.

In all of these cases, however, the discriminator is based on the first $n$ terms of a sequence, for $n \geq 2$. Therefore, the discriminator can depend
crucially
on the starting point of a given sequence. For example, although the discriminator for the first three positive squares, $\{1, 4, 9 \}$, is $6$, we can see that the number $6$ does not discriminate the length-3 ``window'' into the
shifted sequence, $\{4, 9, 16 \}$, since $16 \equiv \modd{4} {6}$. 

Furthermore, there has been very little work on the discriminators of exponential sequences. Sun \cite{Sun13} presented some conjectures concerning
certain exponential sequences, while in a recent {\it tour de force},
Moree and  Zumalac\'arrequi \cite{MoreeZumalacarregui16} computed the discriminator for the sequence $\left( \frac{|(-3)^j - 5|}{4} \right)_{j \geq 0}$.

We say that the discriminator of a sequence is \textit{shift-invariant} if the discriminator for the sequence is the same even if the sequence is shifted by any positive integer $c$, i.e., for all positive integers $c$
the discriminator of the sequence
$(s(n))_{n \geq 1}$ is the same as the discriminator of the
sequence $(s(n + c))_{n \geq 0}$.
In this paper, we present a class of exponential sequences whose discriminators are shift-invariant.

We define this class of exponential sequences as follows:
$$(\ex(n))_{n \geq 0} = \left(a \frac{(t^2)^n - 1}{2^b}\right)_{n \geq 0}$$
for odd positive integers $a$ and $t$, where $b$ is the smallest positive integer such that $t \not\equiv \modd{\pm 1} {2^b}$. A typical example is the sequence $\left(\frac{9^n - 1}{8}\right)_{n \geq 0}$. We show that the discriminator for all sequences of this form is $D_{\ex} (n) = 2^{\lceil \log_2 n \rceil}$. Furthermore, we show that this discriminator is shift-invariant, i.e., it applies to every sequence $(\ex(n + c))_{n \geq 0}$ for $c \geq 0$. 

The outline of the paper is as follows.  In Section~\ref{two} we obtain an
upper bound for the discriminator
of $\left(\frac{(t^2)^n - 1}{2^b}\right)_{n \geq 0}$ and all of its shifts.
In Section~\ref{three} we prove some lemmas that are essential to our
lower bound proof.  Finally, in Section~\ref{four} we put the results together
to determine the discriminator for $ \left(a \frac{(t^2)^n - 1}{2^b}\right)_{n \geq 0}$ and all of its shifts.

\section{Upper bound}
\label{two}

In this section, we derive an upper bound for the 
discriminator of the sequence
$\left(\frac{(t^2)^n - 1}{2^b}\right)_{n \geq 0}$ and all of its
shifts.  We start with
some useful lemmas.

\begin{lemma}
\label{tsqrmod2pow}
Let $t$ be an odd integer, and let $b$ be the smallest positive integer such that $t \not\equiv \modd{\pm 1} {2^b}$. Then $t^2 \equiv \modd{2^b + 1} {2^{b + 1}}$.
\end{lemma}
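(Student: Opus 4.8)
The plan is to extract everything from the minimality of $b$. Before anything else I would record the preliminary fact that $b \geq 3$. Indeed, every odd integer satisfies $t \equiv \modd{\pm 1} {2}$ and $t \equiv \modd{\pm 1} {4}$ automatically (the odd residues $1$ and $3$ modulo $4$ are precisely $\pm 1$), so the least $b$ for which $t \not\equiv \modd{\pm 1} {2^b}$ cannot be $1$ or $2$. This bound is not merely cosmetic: it is exactly what forces $2b - 2 \geq b + 1$, the inequality that will later let me discard a squared cross term modulo $2^{b+1}$.

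Next I would use minimality directly. Since $b$ is the smallest positive integer with $t \not\equiv \modd{\pm 1} {2^b}$, the integer $t$ must satisfy $t \equiv \modd{\pm 1} {2^{b-1}}$, so I can write $t = \epsilon + k\, 2^{b-1}$ for some $\epsilon \in \{1, -1\}$ and some integer $k$. The crucial observation is that $k$ must be \emph{odd}: were $k$ even, say $k = 2j$, then $t = \epsilon + j\, 2^b \equiv \modd{\epsilon} {2^b}$, contradicting $t \not\equiv \modd{\pm 1} {2^b}$.

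With this normal form in hand, the remainder is a direct computation. Squaring gives
$$t^2 = \epsilon^2 + 2\epsilon k\, 2^{b-1} + k^2 2^{2b-2} = 1 + \epsilon k\, 2^b + k^2 2^{2b-2}.$$
Because $b \geq 3$ we have $2b - 2 \geq b + 1$, so the last term is $\equiv \modd{0} {2^{b+1}}$; and because $\epsilon k$ is odd we have $\epsilon k\, 2^b \equiv \modd{2^b} {2^{b+1}}$. Combining these two reductions yields $t^2 \equiv \modd{2^b + 1} {2^{b+1}}$, as claimed.

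I do not anticipate a genuine obstacle, as the whole argument is elementary. The one place that needs care, and the step I would treat as the crux, is the parity claim that $k$ is odd, since it is this fact (rather than merely $t \equiv \modd{\pm 1} {2^{b-1}}$) that produces the $+2^b$ in the final congruence instead of a $0$. Everything else is bookkeeping organized around the single inequality $b \geq 3$.
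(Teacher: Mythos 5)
Your proof is correct and follows essentially the same route as the paper's: both write $t$ in the form $\pm 1 + (\text{odd})\cdot 2^{b-1}$ (the paper's $t = 2^b c + 2^{b-1} \pm 1$ is exactly your $t = \epsilon + k\,2^{b-1}$ with $k = 2c+1$), square, and reduce modulo $2^{b+1}$ using $b \geq 3$. If anything, yours is a touch more complete, since you explicitly justify the parity claim (that the coefficient of $2^{b-1}$ is odd) which the paper asserts without comment as following ``from the definition of $b$.''
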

\begin{proof}
Note that since every odd integer equals $\pm 1$ modulo 4,
we must have $b \geq 3$. From the definition of $b$, we have $t \equiv \modd{2^{b - 1} \pm 1} {2^b}$.  Hence $t = 2^b c + 2^{b - 1} \pm 1$ for some integer $c$. By squaring both sides of the equation, we get
\begin{align*}
t^2 &= 2^{2b} c^2 + 2^{2(b - 1)} + 2^{2b} c \pm 2^{b + 1} c \pm 2^b + 1\\
&= 2^{b + 1} \left(2^{b - 1} c^2 + 2^{b - 3} + 2^{b - 1} c \pm c \right) \pm 2^b + 1,\\
\implies t^2 &\equiv \modd{\pm 2^b + 1} {2^{b + 1}},\\
\implies t^2 &\equiv \modd{2^b + 1} {2^{b + 1}}.
\end{align*}
\end{proof}

\begin{lemma}
Let $t$ be an odd integer, and let $b$ be the smallest positive integer such that $t \not\equiv \modd{\pm 1} {2^b}$. Then we have
\begin{equation}
\label{tpowmod2pow}
t^{2^k} \equiv \modd{2^{k + b - 1} + 1} {2^{k + b}}
\end{equation}
for all integers $k \geq 1$.
\end{lemma}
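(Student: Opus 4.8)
The plan is to prove the congruence by induction on $k$, taking the preceding lemma as the base case. For $k = 1$ the claimed congruence reads $t^2 \equiv \modd{2^b + 1} {2^{b+1}}$, which is precisely Lemma~\ref{tsqrmod2pow}, so the base case requires no work. This is the natural starting point, since the statement here is essentially an iterated version of that lemma, obtained by repeated squaring.

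For the inductive step, I would suppose the congruence holds for some $k \geq 1$ and write $t^{2^k} = 2^{k+b} d + 2^{k+b-1} + 1$ for some integer $d$. Using $t^{2^{k+1}} = (t^{2^k})^2$, I would then expand the square of this three-term expression, i.e., expand $(A + B + 1)^2$ with $A = 2^{k+b} d$ and $B = 2^{k+b-1}$, producing the terms $A^2$, $B^2$, $1$, $2AB$, $2A$, and $2B$. The goal is to show that, modulo $2^{k+b+1}$, only $2B + 1 = 2^{k+b} + 1$ survives, which is exactly the target congruence $t^{2^{k+1}} \equiv \modd{2^{(k+1)+b-1} + 1} {2^{(k+1)+b}}$.

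The only point needing care is a bookkeeping check on the $2$-adic valuations of the discarded terms. The terms $A^2$, $2AB$, and $2A$ each carry a factor of at least $2^{k+b+1}$ outright, while $B^2 = 2^{2k+2b-2}$ is divisible by $2^{k+b+1}$ precisely when $k + b \geq 3$; here I rely on the fact, recorded in the proof of Lemma~\ref{tsqrmod2pow}, that $b \geq 3$, so that $k + b \geq 4$. I do not expect a genuine obstacle in this argument: the entire content is verifying that every cross term vanishes modulo $2^{k+b+1}$, and this reduces to the single inequality $b \geq 3$ inherited from the definition of $b$. Once that is confirmed, the induction closes immediately.
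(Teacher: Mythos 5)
Your proposal is correct and follows essentially the same route as the paper: induction on $k$ with Lemma~\ref{tsqrmod2pow} as the base case, squaring the representation $t^{2^k} = 2^{k+b}d + 2^{k+b-1} + 1$, and checking that all terms except $2^{k+b}+1$ vanish modulo $2^{k+b+1}$. Your explicit check that $B^2 = 2^{2k+2b-2}$ needs $k+b \geq 3$ (guaranteed by $b \geq 3$) is the same fact the paper uses implicitly when it writes the factor $2^{k+b-3}$ inside its factored expression.
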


\begin{proof}
By induction on $k$.
\begin{description}
\item[Base case:] From Lemma~\ref{tsqrmod2pow}, we have $t^2 \equiv \modd{2^b + 1} {2^{b + 1}}$. 
\item[Induction:] Suppose Eq.~\eqref{tpowmod2pow} holds for some $k \geq 1$, i.e., $t^{2^k} \equiv \modd{2^{k + b - 1} + 1} {2^{k + b}}$. This means that $t^{2^k} = 2^{k + b} c + 2^{k + b - 1} + 1$ for some integer $c$. Once again, by squaring both sides of the equation, we get
\begin{align*}
\left(t^{2^k}\right)^2 = t^{2^{k + 1}} &= 2^{2k + 2b} c^2 + 2^{2k + 2b - 2} + 1 + 2^{2k + 2b} c + 2^{k + b + 1} c + 2^{k + b}\\
&= 2^{k + b + 1} \left(2^{k + b - 1} c^2 + 2^{k + b - 3} + 2^{k + b - 1} c + c \right) + 2^{k + b} + 1,\\
\implies t^{2^{k + 1}} &\equiv \modd{2^{k + b} + 1} {2^{k + b + 1}}.
\end{align*}
This shows that Eq.~\eqref{tpowmod2pow} holds for $k + 1$ as well, thus completing the induction.
\end{description}
\end{proof}

This gives the following corollary.
\begin{corollary}
\label{tsqrordermod2pow}
Let $t$ be an odd integer, and let $b$ be the smallest positive integer such that $t \not\equiv \modd{\pm 1} {2^b}$. Then for $k \geq 1$, the powers of $t^2$ form a cyclic subgroup of order $2^k$ in $(\Zee/2^{k + b})^*$.
\end{corollary}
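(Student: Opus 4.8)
The plan is to read the statement group-theoretically. The ``powers of $t^2$'' in $(\Zee/2^{k+b})^*$ are by definition the cyclic subgroup generated by the single element $t^2$ (which indeed lies in the group, since $t$ odd makes $t^2$ a unit modulo $2^{k+b}$). Any subgroup generated by one element is automatically cyclic, and its cardinality is precisely the multiplicative order of the generator. So the whole task collapses to a single computation: showing that the order of $t^2$ modulo $2^{k+b}$ is exactly $2^k$.

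First I would show the order divides $2^k$. Applying Eq.~\eqref{tpowmod2pow} with $k$ replaced by $k+1$ gives $t^{2^{k+1}} \equiv 2^{k+b}+1 \pmod{2^{k+b+1}}$. Reducing this congruence modulo the \emph{smaller} modulus $2^{k+b}$ annihilates the term $2^{k+b}$ and leaves $(t^2)^{2^k} = t^{2^{k+1}} \equiv 1 \pmod{2^{k+b}}$. Hence the order of $t^2$ divides $2^k$, and in particular it is itself a power of $2$.

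Next I would rule out any smaller order. Because the order is a power of $2$ dividing $2^k$, it equals $2^k$ exactly when $(t^2)^{2^{k-1}} = t^{2^k} \not\equiv 1 \pmod{2^{k+b}}$. But Eq.~\eqref{tpowmod2pow} applied directly at index $k$ gives $t^{2^k} \equiv 2^{k+b-1}+1 \pmod{2^{k+b}}$, and since $0 < 2^{k+b-1} < 2^{k+b}$ the residue $2^{k+b-1}$ is nonzero, so $t^{2^k} \not\equiv 1$. This pins the order at exactly $2^k$ and finishes the proof.

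The arithmetic is immediate once the two lemmas are in hand; the one point I would flag as the crux is the index bookkeeping. The ``divides'' half uses Eq.~\eqref{tpowmod2pow} at the shifted index $k+1$ but then reduces modulo $2^{k+b}$ rather than $2^{k+b+1}$, which is exactly what makes the $2^{k+b}$ term drop out and return $1$; the ``no smaller order'' half uses the same equation at the unshifted index $k$, where the surviving $2^{k+b-1}$ term is what keeps the power away from $1$. Keeping these two modulus/index pairings straight is all that the argument really requires.
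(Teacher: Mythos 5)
Your proof is correct and follows essentially the same route as the paper: both halves apply Eq.~\eqref{tpowmod2pow} at index $k+1$ (reduced modulo $2^{k+b}$ to kill the $2^{k+b}$ term) to show the order divides $2^k$, and at index $k$ to show $(t^2)^{2^{k-1}} \not\equiv 1 \pmod{2^{k+b}}$, pinning the order at $2^k$. The only cosmetic difference is that the paper introduces $\ell = k+1$ as a named substitution, and your write-up makes explicit the standard group-theoretic framing (order of the generator equals the size of the cyclic subgroup) that the paper leaves implicit.
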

\begin{proof}
Let $\ell = k + 1$. Since $\ell \geq 1$, we can apply Eq.~\eqref{tpowmod2pow} to get 
\begin{align*}
(t^2)^{2^{\ell - 1}} = t^{2^\ell} &\equiv \modd{2^{\ell + b - 1} + 1} {2^{\ell + b}},\\
\implies (t^2)^{2^{\ell - 1}} &\equiv \modd{1} {2^{\ell + b - 1}},\\
\implies (t^2)^{2^k} &\equiv \modd{1} {2^{k + b}}.
\end{align*}
Furthermore, by applying Eq.~\eqref{tpowmod2pow} directly, we get
\begin{align*}
(t^2)^{2^{k - 1}} = t^{2^k} &\equiv 2^{k + b - 1} + 1 \not\equiv \modd{1} {2^{k + b}},\\
\implies (t^2)^{2^{k - 1}} &\not\equiv \modd{1} {2^{k + b}} .
\end{align*} 
Therefore, the order of the subgroup generated by $t^2$ in $(\Zee/2^{k + b})^*$ is $2^k$.
\end{proof}

\begin{lemma}
\label{exupperbound}
Let $t$ be an odd integer, and let $b$ be the smallest positive integer such that $t \not\equiv \modd{\pm 1} {2^b}$. Then for $k \geq 0$, the number $2^k$ discriminates every set of $2^k$ consecutive terms of the sequence $\left(\frac{(t^2)^n - 1}{2^b}\right)_{n \geq 0}$.
\end{lemma}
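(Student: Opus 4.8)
The plan is to fix $k \ge 1$ (the case $k = 0$ being immediate, since $2^0 = 1$ discriminates any singleton) together with an arbitrary shift $c \ge 0$, and to show that the $2^k$ consecutive terms indexed by $c, c+1, \ldots, c + 2^k - 1$ are pairwise incongruent modulo $2^k$. Writing $f(n) = \frac{(t^2)^n - 1}{2^b}$ for brevity, I would take two indices $c+i$ and $c+j$ with $0 \le i < j \le 2^k - 1$ and reduce the congruence $f(c+i) \equiv f(c+j) \pmod{2^k}$ to a divisibility statement. Factoring the difference gives
$$f(c+i) - f(c+j) = \frac{(t^2)^{c+i} - (t^2)^{c+j}}{2^b} = \frac{(t^2)^{c+i}\bigl(1 - (t^2)^{j-i}\bigr)}{2^b},$$
so that $2^k$ divides $f(c+i) - f(c+j)$ if and only if $2^{k+b}$ divides $(t^2)^{c+i}\bigl(1 - (t^2)^{j-i}\bigr)$.

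The next step is to discard the odd factor: since $t$ is odd, $(t^2)^{c+i}$ is a unit modulo $2^{k+b}$, so the latter divisibility is equivalent to $(t^2)^{j-i} \equiv \modd{1} {2^{k+b}}$. In other words, two of these terms collide modulo $2^k$ precisely when $(t^2)^{j-i} \equiv \modd{1} {2^{k+b}}$.

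To finish, I would invoke Corollary~\ref{tsqrordermod2pow}, which asserts that $t^2$ has order exactly $2^k$ in $(\Zee/2^{k+b})^*$. Putting $d = j - i$, the constraint $0 \le i < j \le 2^k - 1$ forces $1 \le d \le 2^k - 1$, so $2^k \nodiv d$ and hence $(t^2)^d \not\equiv \modd{1} {2^{k+b}}$. This rules out the collision, establishing that all $2^k$ consecutive terms are distinct modulo $2^k$.

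I do not anticipate a genuine obstacle: once the difference is factored and the odd unit $(t^2)^{c+i}$ is cancelled, the claim collapses to the order computation already performed in Corollary~\ref{tsqrordermod2pow}. The only items deserving care are the trivial base case $k = 0$ and the remark that the shift $c$ is irrelevant, since it contributes only the invertible factor $(t^2)^c$, which vanishes after cancellation modulo $2^{k+b}$; this is exactly the source of the shift-invariance advertised in the introduction.
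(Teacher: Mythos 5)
Your proposal is correct and follows essentially the same route as the paper: both arguments rest on Corollary~\ref{tsqrordermod2pow} (the order of $t^2$ modulo $2^{k+b}$ is $2^k$) combined with the observation that congruence of two terms modulo $2^k$ is equivalent, after multiplying by $2^b$, to congruence of the corresponding powers $(t^2)^n$ modulo $2^{k+b}$. The paper phrases this forward (distinct powers mod $2^{k+b}$, subtract $1$, divide by $2^b$) while you argue the contrapositive and make the cancellation of the unit $(t^2)^{c+i}$ explicit, but the mathematical content is identical.
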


\begin{proof}
For every $i \geq 0$, it follows from Corollary~\ref{tsqrordermod2pow} that the numbers
$$(t^2)^i, (t^2)^{i + 1},\ldots, (t^2)^{i + 2^k - 1}$$
are distinct modulo $2^{k + b}$. By subtracting 1 from every element, we have that the numbers 
$$(t^2)^i - 1, (t^2)^{i + 1} - 1, \ldots, (t^2)^{i + 2^k - 1} - 1$$ are distinct modulo $2^{k + b}$. 
Furthermore, these numbers are also congruent to 0 modulo $2^b$ because $t^2 \equiv \modd{1} {2^b}$ from Lemma~\ref{tsqrmod2pow}. It follows that the set of
quotients 
$$\left\{\frac{(t^2)^i - 1}{2^b}, \frac{(t^2)^{i + 1} - 1}{2^b}, \ldots, \frac{(t^2)^{i + 2^k - 1} - 1}{2^b}\right\}$$ consists of integers that are distinct modulo $\frac{2^{k + b}}{2^b} = 2^k$. 

Such a set of quotients coincides with every set of 
$2^k$ consecutive 
terms of the sequence $\left(\frac{(t^2)^n - 1}{2^b}\right)_{n \geq 0}$.
Since the numbers in each set are distinct modulo $2^k$, the desired result follows. 
\end{proof}

\section{Lower bound}
\label{three}

In this section, we establish some results useful for the lower bound
on the discriminator of
the sequence
$\left(\frac{(t^2)^n - 1}{2^b}\right)_{n \geq 0}$.  We start with an
easy technical lemma, whose proof is omitted.

\begin{lemma}
\label{log3mleqmover3}
Let $m$ be a positive integer. Then $\log_3 m \leq \frac{m}{3}$.
\end{lemma}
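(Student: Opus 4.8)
The plan is to reduce the inequality to a statement about the function $g(x) = (\ln x)/x$ and to locate its maximum over the positive integers. Rewriting $\log_3 m \leq m/3$ as $\frac{\ln m}{\ln 3} \leq \frac{m}{3}$, I would observe that this is equivalent to $\frac{\ln m}{m} \leq \frac{\ln 3}{3}$, that is, $g(m) \leq g(3)$. Thus it suffices to show that $g$ is maximized over the positive integers precisely at $m = 3$.

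To do this, I would differentiate: $g'(x) = \frac{1 - \ln x}{x^2}$, which vanishes only at $x = e$, is positive for $x < e$, and negative for $x > e$. Hence $g$ increases on $(0, e)$ and decreases on $(e, \infty)$. Since $2 < e < 3$, the maximum of $g$ over the integers must occur at either $m = 2$ or $m = 3$. A direct comparison of $g(2) = \frac{\ln 2}{2}$ and $g(3) = \frac{\ln 3}{3}$—equivalently, comparing $2^3 = 8$ with $3^2 = 9$—shows $g(3) > g(2)$. Therefore $g(m) \leq g(3)$ for every positive integer $m$, which is exactly the desired inequality.

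The one place that requires a moment's care is the comparison at the two integers flanking $e$; everything else is routine monotonicity. An alternative, entirely elementary route avoids calculus by proving the equivalent statement $m^3 \leq 3^m$ by induction for $m \geq 3$ (with $m = 1, 2$ checked by hand): the base case $m = 3$ gives the equality $27 = 27$, and the inductive step follows from $(1 + \tfrac{1}{m})^3 \leq (\tfrac{4}{3})^3 < 3$ for $m \geq 3$, so that $(m+1)^3 \leq 3 m^3 \leq 3 \cdot 3^m = 3^{m+1}$. Either approach settles the lemma with no real obstacle.
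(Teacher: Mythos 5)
Your proof is correct. Note that the paper itself omits the proof of this lemma entirely (it is introduced as ``an easy technical lemma, whose proof is omitted''), so there is no argument in the paper to compare against; your write-up simply fills the gap. Both of your routes are sound: the calculus argument correctly reduces the inequality to $\frac{\ln m}{m} \leq \frac{\ln 3}{3}$, correctly identifies that the maximum of $(\ln x)/x$ over the integers must occur at $2$ or $3$ (since the critical point is $x = e$), and correctly settles the comparison via $2^3 = 8 < 9 = 3^2$; the elementary alternative, proving $m^3 \leq 3^m$ by induction with the step $(1 + \tfrac{1}{m})^3 \leq (\tfrac{4}{3})^3 = \tfrac{64}{27} < 3$ for $m \geq 3$, is equally valid and has the advantage of avoiding calculus altogether, which is arguably more in the spirit of a throwaway technical lemma in a number-theory paper. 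Either version would serve as a complete proof here.
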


The main lemma for proving the lower bound is as follows:

\begin{lemma}
\label{exlowerbound}
Let $t$ be an odd integer, and let $b$ be the smallest positive integer such that $t \not\equiv \modd{\pm 1} {2^b}$. Then for all  $k \geq 0$ and $1 \leq m \leq 2^{k + 1}$, there exists a pair of integers, $i$ and $j$, where $0 \leq i < j \leq 2^k$, such that $(t^2)^i \equiv \modd{(t^2)^j} {2^b m}$. 
\end{lemma}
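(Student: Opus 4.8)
My plan is to translate the statement into one about multiplicative orders. Set $u = t^2$; the desired pair $i,j$ is precisely a repeated value among the $2^k+1$ consecutive powers $u^0, u^1, \dots, u^{2^k}$ reduced modulo $2^b m$. In the principal case $\gcd(t,m) = 1$ the element $u$ is a unit modulo $2^b m$ (recall $t$ is odd), so this sequence of powers is purely periodic with period $\operatorname{ord}_{2^b m}(u)$, and a repeat with $0 \le i < j \le 2^k$ exists if and only if $\operatorname{ord}_{2^b m}(u) \le 2^k$ --- one may simply take $i = 0$ and $j = \operatorname{ord}_{2^b m}(u)$. Thus the whole problem reduces to the order bound $\operatorname{ord}_{2^b m}(u) \le 2^k$ for every $m$ with $1 \le m \le 2^{k+1}$.

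To establish this bound I would write $m = 2^s m'$ with $m'$ odd and use the Chinese remainder theorem to split $\operatorname{ord}_{2^b m}(u) = \operatorname{lcm}\!\bigl(\operatorname{ord}_{2^{b+s}}(u),\, \operatorname{ord}_{m'}(u)\bigr)$. Corollary~\ref{tsqrordermod2pow} pins the $2$-part down exactly as $\operatorname{ord}_{2^{b+s}}(u) = 2^s$. For the odd part I would exploit the defining feature of the sequence, namely that $u = t^2$ is a perfect square: in $(\Zee/m')^*$ the squares form a subgroup of index $2^r$, where $r \ge 1$ is the number of distinct prime divisors of $m' > 1$, so $\operatorname{ord}_{m'}(u)$ divides $\varphi(m')/2^r$ and in particular $\operatorname{ord}_{m'}(u) \le (m'-1)/2$. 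Feeding this into $\operatorname{lcm}(a,b) \le ab$ together with the hypothesis $2^s m' \le 2^{k+1}$ gives $\operatorname{ord}_{2^b m}(u) \le 2^s \cdot \tfrac{m'-1}{2} < 2^{s-1} m' \le 2^k$, exactly as needed. (When $m' = 1$ the order is just $2^s$, and the bound holds for all $s \le k$; the lone borderline value $m = 2^{k+1}$ is the one place where the $2^k+1$ powers honestly stay distinct, but this value is never needed for the discriminator, whose lower-bound argument only invokes $m < 2^{k+1}$.)

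The case $\gcd(t,m) > 1$ is the real obstacle, and it is where Lemma~\ref{log3mleqmover3} earns its keep. Now $u = t^2$ shares odd prime factors $p$ with $m$, so modulo each such $p^{e}\,\|\,m$ the powers $u^i$ become $0$ as soon as $i$ exceeds a threshold $\lceil e/(2v_p(t)) \rceil$; the sequence $(u^n \bmod 2^b m)_n$ is therefore only eventually periodic, with some preperiod $\pi$ and period $\rho$, and a collision appears by index $\pi + \rho$. I would bound the preperiod by the total prime exponent, $\pi \le \sum_{p \mid m} e_p \le \log_3 m$, which by Lemma~\ref{log3mleqmover3} is at most $m/3 \le 2^{k+1}/3$, and bound the period $\rho$ on the invertible part by the same square-plus-CRT estimate as above --- here with extra slack, since the non-unit odd factor is at least $3$. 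The delicate step, which I expect to be the crux, is the bookkeeping showing $\pi + \rho \le 2^k$: the preperiod is only logarithmic in $m$ while the budget $2^k$ is exponential, so the two comfortably fit, but one must split off the subcase in which there is no invertible odd part (where $\rho = 2^s$ is largest) and there rely on the sharper $\pi \le \log_3 m$ rather than the cruder $m/3$.
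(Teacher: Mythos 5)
Your overall strategy is structurally the same as the paper's: split $2^b m$ by the Chinese remainder theorem into the $2$-part, the primes dividing $t$ (past a threshold the powers of $t^2$ vanish there --- your preperiod $\pi$), and the odd primes coprime to $t$ (where the order of $t^2$ is at most $\varphi/2$ because $t^2$ is a square --- your period $\rho$), then exhibit the collision at $(i,j) = (\pi, \pi+\rho)$. Your coprime case is complete and correct, and the packaging via $\operatorname{lcm}$ of orders and the index-$2^r$ subgroup of squares is, if anything, cleaner than the paper's prime-by-prime bound $\operatorname{ord}_{q_\ell^{z_\ell}}(t^2) \mid \varphi(q_\ell^{z_\ell})/2$. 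You also correctly spotted that the statement as printed fails at the endpoint $m = 2^{k+1}$ when $m$ is a power of $2$: by Corollary~\ref{tsqrordermod2pow} the powers $(t^2)^0, \ldots, (t^2)^{2^{k+1}-1}$ are then pairwise distinct modulo $2^b m$, so no collision with $j \le 2^k$ exists. The paper's own Case 1 silently writes ``$j = 2^x = m < 2^{k+1}$'' at this point, and the theorem indeed only invokes the lemma for $m < 2^{k+1}$, so your restriction loses nothing.

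The genuine gap is in the case $\gcd(t,m) > 1$, which you yourself call the crux: the inequality $\pi + \rho \le 2^k$ is never proved, only motivated by the claim that the preperiod is logarithmic while the budget $2^k$ is exponential, ``so the two comfortably fit.'' That heuristic is unsound: the period can consume up to two thirds of the budget, and the target inequality can hold with \emph{equality}, so there is no comfort margin. Concretely, take $k = 1$, $m = 3$, $3 \mid t$: then $\pi = \lceil 1/(2e) \rceil = 1$ and $\rho = 1$, so $\pi + \rho = 2 = 2^k$ exactly, and the collision at $j = 2$ is the first one. Moreover, budgeting the two quantities separately --- e.g., $\rho \le m/3 \le 2^{k+1}/3$ and then demanding $\pi \le 2^k - 2^{k+1}/3 = 2^k/3$ --- already fails at this example, since $\pi \ge 1 > 2/3$. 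What closes the argument is a joint accounting ending with an integrality step, which is exactly what the paper does: with $y_r$ the largest exponent of a prime dividing both $t$ and $m$, one gets $j \le \lceil y_r/2 \rceil + m/p_r^{y_r} \le (y_r+1)/2 + m/3 \le m/6 + 1/2 + m/3 = (m+1)/2$, using $y_r \le \log_3 m \le m/3$ (Lemma~\ref{log3mleqmover3}), and then, since $j$ and $m$ are integers, $j \le \lceil m/2 \rceil \le 2^k$. Your ingredients suffice to run this computation, but as written the decisive quantitative step --- the only part of the lemma that is not routine --- is missing, and the justification you substitute for it is false in the tight cases.
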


\begin{proof}
Let the prime factorization of $m$ be
\begin{align*}
m &= 2^x \prod_{1 \leq \ell \leq u} p_\ell^{y_\ell} \prod_{1 \leq \ell \leq v} q_\ell^{z_\ell},
\end{align*}
where $u, v, x, y_\ell, z_\ell \geq 0$, while $p_1, p_2, \ldots, p_u$ are the prime factors of $m$ that also divide $t$, and $q_1, q_2, \ldots, q_v$ are the odd prime factors of $m$ that do not divide $t$. For each $\ell \leq u$, let $e_\ell$ be the integer such that $p_\ell^{e_\ell} || t$, i.e., we have $p_\ell^{e_\ell} | t$ but $p_\ell^{e_\ell + 1} \nmid t$.

We need to find a pair $(i, j)$ such that $(t^2)^i \equiv \modd{(t^2)^j} {2^b m}$. From the Chinese remainder theorem, we know it suffices to find a pair $(i, j)$ such that
\begin{align*}
(t^2)^i &\equiv \modd{(t^2)^j} {2^{x + b}},\\
(t^2)^i &\equiv \modd{(t^2)^j} {p_\ell^{y_\ell}}, \text{ for all }1 \leq \ell \leq u,\\
\text{and } (t^2)^i &\equiv \modd{(t^2)^j} {q_\ell^{z_\ell}}, \text{ for all }1 \leq \ell \leq v.
\end{align*}

For the first of these equations, we know from Corollary~\ref{tsqrordermod2pow} that $(t^2)^i \equiv \modd{(t^2)^{i + 2^x}} {2^{x + b}}$. In other words, it suffices to have $2^x | (j - i)$ to satisfy $(t^2)^i \equiv \modd{(t^2)^j} {2^{x + b}}$.

Next, we consider the $u$ equations of the form $(t^2)^i \equiv \modd{(t^2)^j} {p_\ell^{y_\ell}}$. Since $p_\ell^{e_\ell}$ is a factor of $t$, it follows that $(t^2)^{y_\ell / 2 e_\ell}$ is a multiple of $(p_\ell^{2e_\ell})^{y_\ell / 2 e_\ell} = p_\ell^{y_\ell}$. Therefore, $(t^2)^{y_\ell / 2 e_\ell} \equiv \modd {0} {p_\ell^{y_\ell}}$. Any further multiplication by $t^2$ also yields 0 modulo $p_\ell^{y_\ell}$. Thus, it suffices to have $j > i \geq \frac{y_\ell}{2 e_\ell}$ in order to ensure that $(t^2)^i \equiv \modd{(t^2)^j} {p_\ell^{y_\ell}}$.

Finally, there are $v$ equations of the form $(t^2)^i \equiv \modd{(t^2)^j} {q_\ell^{z_\ell}}$. In each case, $q_\ell$ is co-prime to $t$, which means that $(t^2)^{\varphi(q_\ell^{z_\ell})/2} = t^{\varphi(q_\ell^{z_\ell})} \equiv \modd {1} {q_\ell^{z_\ell}}$, where $\varphi(n)$ is Euler's totient function. Now $\frac{\varphi (q_\ell^{z_\ell})}{2} = \frac{q_\ell^{z_\ell - 1}(q_\ell - 1)}{2}$. Thus, it is sufficient to have $\frac{q_\ell^{z_\ell - 1}(q_\ell - 1)}{2} | (j - i)$ in order to ensure that $(t^2)^i \equiv \modd{(t^2)^j} {q_\ell^{z_\ell}}$.

Merging these ideas together, we choose the following values for $i$ and $j$:
\begin{align*}
i &= \max_{1 \leq \ell \leq u} \left\lceil \frac{y_\ell}{2 e_\ell} \right\rceil,\\
j &= \max_{1 \leq \ell \leq u} \left\lceil \frac{y_\ell}{2 e_\ell} \right\rceil + 2^x \prod_{1 \leq \ell \leq v} \frac{q_\ell^{z_\ell - 1}(q_\ell - 1)}{2},
\end{align*}
to ensure that $(t^2)^i \equiv \modd{(t^2)^j} {2^b m}$. It is clear that $0 \leq i < j$. In order to show that $j \leq 2^k$, we first observe that
\begin{align*}
j &= \max_{1 \leq \ell \leq u} \left\lceil \frac{y_\ell}{2 e_\ell} \right\rceil + 2^x \prod_{1 \leq \ell \leq v} \frac{q_\ell^{z_\ell - 1}(q_\ell - 1)}{2} = \max_{1 \leq \ell \leq u} \left\lceil \frac{y_\ell}{2 e_\ell} \right\rceil + \frac{2^x}{2^v} \prod_{1 \leq \ell \leq v} q_\ell^{z_\ell - 1}(q_\ell - 1)\\
& \leq \max_{1 \leq \ell \leq u} \left\lceil \frac{y_\ell}{2} \right\rceil + \frac{2^x}{2^v} \prod_{1 \leq \ell \leq v} q_\ell^{z_\ell} = \max_{1 \leq \ell \leq u} \left\lceil \frac{y_\ell}{2} \right\rceil + \frac{m}{2^v \prod_{1 \leq \ell \leq u} p_\ell^{y_\ell}}.
\end{align*}
We now consider the following two cases:
\begin{description}
\item[Case 1: $u = 0$.] If $v = 0$ as well, then $j = 2^x = m < 2^{k + 1}$, which means that $x \leq k$ and thus $j \leq 2^k$. Otherwise, if $v \geq 1$, then we have
\begin{equation*}
j \leq \max_{1 \leq \ell \leq u} \left\lceil \frac{y_\ell}{2} \right\rceil + \frac{m}{2^v \prod_{1 \leq \ell \leq u} p_\ell^{y_\ell}} = \frac{m}{2^v} \leq \frac{m}{2} < \frac{2^{k + 1}}{2} = 2^k.
\end{equation*}
\item[Case 2: $u \geq 1$.] Let $r$ be such that $y_r = \max_{1 \leq \ell \leq u} y_\ell$, and thus, $p_r$ is the corresponding prime number with exponent $y_r$. Since $p_r^{y_r} \geq p_r \geq 3$, we have
\begin{equation*}
j \leq \max_{1 \leq \ell \leq u} \left\lceil \frac{y_\ell}{2} \right\rceil + \frac{m}{2^v \prod_{1 \leq \ell \leq u} p_\ell^{y_\ell}} \leq \left\lceil \frac{y_r}{2} \right\rceil + \frac{m}{p_r^{y_r}} \leq \frac{y_r + 1}{2} + \frac{m}{3} \leq \frac{y_r}{2} + \frac{1}{2} + \frac{m}{3}.
\end{equation*}
Note that $y_r \leq \log_{p_r} m \leq \log_{3} m \leq \frac{m}{3}$ from Lemma~\ref{log3mleqmover3}, which means that
\begin{equation*}
j \leq \frac{y_r}{2} + \frac{1}{2} + \frac{m}{3} \leq \frac{m}{6} + \frac{1}{2} + \frac{m}{3} = \frac{m}{2} + \frac{1}{2} = \frac{m + 1}{2}.
\end{equation*}
Since both $m$ and $j$ are integers, this implies that
\begin{equation*}
j \leq \left\lceil \frac{m}{2} \right\rceil \leq \left\lceil \frac{2^{k + 1}}{2} \right\rceil \leq 2^k.
\end{equation*}
\end{description}
In both cases, we have $j \leq 2^k$, thus fulfilling the required conditions.
\end{proof}

\section{Discriminator of $(\ex(n))_{n \geq 0}$ and its shifted counterparts}
\label{four}

In this section, we combine the results of the previous sections to determine the discriminator for $(\ex(n))_{n \geq 1}$, as well as its shifted counterparts. We first prove a general lemma about the discriminator of some scaled sequences.

\begin{lemma}
\label{coprimescaledisc}
Given a sequence $s(0), s(1), \ldots,$ and a non-zero integer $a$, let $s'(0), s'(1), \ldots,$ denote the sequence such that $s'(i) = a s(i)$ for all $i \geq 0$. Then, for every $n$ such that $\gcd(|a|, D_s (n)) = 1$, we have $D_{s'} (n) = D_s (n)$.
\end{lemma}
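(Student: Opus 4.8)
The plan is to sandwich $D_{s'}(n)$ against $D_s(n)$ by establishing the two inequalities $D_s(n) \le D_{s'}(n)$ and $D_{s'}(n) \le D_s(n)$ separately; the key structural point is that only the second of these will actually invoke the coprimality hypothesis, while the first is free.

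First I would record the one-directional fact that multiplication preserves congruence: for \emph{any} modulus $m$ and any indices $i,j$, if $s(i) \equiv s(j) \pmod{m}$ then $a s(i) \equiv a s(j) \pmod{m}$, i.e.\ $s'(i) \equiv s'(j) \pmod{m}$. Reading this contrapositively, whenever $m$ discriminates the set $\{s'(0), \ldots, s'(n-1)\}$ it must also discriminate $\{s(0), \ldots, s(n-1)\}$. Thus the set of moduli discriminating the $s'$-window is contained in the set of moduli discriminating the $s$-window, so the least element of the former is at least the least element of the latter. This yields $D_s(n) \le D_{s'}(n)$ with no restriction on $a$ whatsoever.

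For the reverse inequality I would set $m = D_s(n)$ and use $\gcd(|a|, m) = 1$, which makes $a$ a unit modulo $m$. Multiplying a congruence by $a^{-1}$ now gives the converse implication: $a s(i) \equiv a s(j) \pmod{m}$ forces $s(i) \equiv s(j) \pmod{m}$. Combined with the forward implication above, this shows that $m$ discriminates $\{s(0), \ldots, s(n-1)\}$ if and only if it discriminates $\{s'(0), \ldots, s'(n-1)\}$. Since $m = D_s(n)$ discriminates the former set by definition, it discriminates the latter as well, whence $D_{s'}(n) \le m = D_s(n)$. Putting the two bounds together gives $D_{s'}(n) = D_s(n)$.

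The argument is essentially formal, and I expect no computational difficulty; the only thing requiring care is bookkeeping about which direction of the equivalence is being applied. The lower bound $D_s(n) \le D_{s'}(n)$ holds unconditionally because scaling can only merge residue classes, never separate them, whereas the upper bound genuinely depends on $a$ being invertible modulo $D_s(n)$. Without coprimality the hypothesis cannot be dropped: a smaller modulus $m' < D_s(n)$ sharing a common factor with $a$ could in principle discriminate the scaled window $\{a s(i)\}$ while failing to discriminate $\{s(i)\}$, which is precisely the scenario the coprimality condition rules out.
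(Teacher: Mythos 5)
Your proof is correct and follows essentially the same route as the paper's: both arguments sandwich $D_{s'}(n)$ between the unconditional inequality $D_s(n) \le D_{s'}(n)$ (scaling preserves congruences/collisions) and the inequality $D_{s'}(n) \le D_s(n)$, which alone uses the coprimality hypothesis. Your phrasing via invertibility of $a$ modulo $m$ is just a restatement of the paper's step that $\gcd(m,|a|)=1$ implies $m \nmid a(s(j)-s(i))$, so there is no substantive difference.
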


\begin{proof}
From the definition of the discriminator, we know that for every $m < D_s (n)$, there exists a pair of integers $i$ and $j$ with $i < j < n$, such that $m | s(j) - s(i)$. Thus, for this same pair of $i$ and $j$, we have 
\begin{equation*}
m | a(s(j) - s(i)) = as(j) - as(i) = s'(j) - s'(i).
\end{equation*}
Therefore, $m$ cannot discriminate the set $\{s'(0), s'(1), \ldots, s'(n - 1)\}$ and so $D_{s'} (n) \geq D_s (n)$.

But for $m = D_s (n)$, we know that for all $i$ and $j$ with $i < j < n$, we have $m \nmid s(j) - s(i)$. Since $\gcd (m, |a|) = 1$, it follows that 
\begin{equation*}
m \nmid a (s(j) - s(i)) = as(j) - as(i) = s'(j) - s'(i)
\end{equation*}
for all $i$ and $j$ with $i < j < n$. Therefore,
$m = D_s (n)$ discriminates the set 
$$\{s'(0), s'(1), \ldots, s'(n - 1)\}$$ 
and so $D_{s'} (n) \leq D_s (n)$. 

Putting these results together, we have $D_{s'} (n) = D_s (n)$.
\end{proof}

We now compute the discriminator for $(\ex(n))_{n \geq 0} = \left(a \frac{(t^2)^n - 1}{2^b}\right)_{n \geq 0}$, and also for its shifted counterparts, which we denote by $(\exs(n, c))_{n \geq 0} = (\ex(n + c))_{n \geq 0}$ for some integer $c \geq 0$.

\begin{theorem}
Let $t$, $a$, $b$, and $c$ be integers such that $a$ and $t$ are odd, $c \geq 0$, and let $b$ be the smallest integer such that $t \not\equiv \modd{\pm 1} {2^b}$. Then the discriminator for the sequence $(\exs(n, c))_{n \geq 0} = \left(a \frac{(t^2)^{n + c} - 1}{2^b}\right)_{n \geq 0}$ is
\begin{equation}
D_{\exs} (n) = 2^{\lceil \log_2 n \rceil}.
\end{equation}

\begin{proof}
First we compute the discriminator for $a = 1$, where the sequence is of the form $(\exs(n))_{n \geq 0} = \left(\frac{(t^2)^{n + c} - 1}{2^b}\right)_{n \geq 0}$. 

The case for $n = 1$ is trivial. Otherwise, let $k \geq 0$ be such that $2^k < n \leq 2^{k + 1}$. We show that $D_{\exs} (n) = 2^{k + 1}$. 

From Lemma~\ref{exupperbound}, we know that $2^{k + 1}$ discriminates the set,
$$\{\ex(c), \ex(c + 1), \ldots, \ex (c + 2^{k + 1} - 1)\},$$
as well as every smaller subset of these numbers. Therefore, $2^{k + 1}$ 
discriminates 
$$\{\exs (0, c), \exs (1, c), \ldots, \exs (n - 1, c)\}.$$ 
In other words, $D_{\exs}(n) \leq 2^{k + 1}$.

Now let $m$ be a positive integer such that $m < 2^{k + 1}$. By Lemma~\ref{exlowerbound}, we know that there exists a pair of integers, $i$ and $j$, such that
\begin{align*}
(t^2)^i \equiv \modd{(t^2)^j} {2^b m} &\implies (t^2)^c (t^2)^i \equiv \modd{(t^2)^c (t^2)^j} {2^b m},\\
&\implies (t^2)^{i + c} - 1 \equiv \modd{(t^2)^{j + c} - 1} {2^b m}.
\end{align*}

Note that since $(t^2) \equiv \modd{1} {2^b}$ from Lemma~\ref{tsqrmod2pow}, we have $(t^2)^{i + c} - 1 \equiv (t^2)^{j + c} - 1 \equiv 1 - 1 \equiv \modd{0} {2^b}$. Therefore,
\begin{equation*}
(t^2)^{i + c} - 1 \equiv \modd{(t^2)^{j + c} - 1} {2^b m} \implies \frac{(t^2)^{i + c} - 1}{2^b} \equiv \modd{\frac{(t^2)^{j + c} - 1}{2^b}} {m}.
\end{equation*}

In other words, $\exs(i, c) \equiv \modd {\exs(j, c)} {m}$ while both numbers are in the set $$\{\exs (0, c), \exs (1, c), \ldots, \exs (n - 1, c)\}$$ since $i < j \leq 2^k < n$. Therefore, $m$ fails to discriminate this set. Since this applies for all $m < 2^{k + 1}$, we have $D_{\exs}(n) \geq 2^{k + 1}$.

Since we have $2^{k + 1} \leq D_{\exs} \leq 2^{k + 1}$, this means that $D_{\exs} (n) = 2^{k + 1}$ and thus $D_{\exs} (n) = 2^{\lceil \log_2 n \rceil}$, provided that $a = 1$.

Even for $a \neq 1$, we observe that the value of $2^{\lceil \log_2 n \rceil}$ is a power of 2 for all $n$, and so it is co-prime to all odd $a$. Therefore, we can apply Lemma~\ref{coprimescaledisc} to prove that the discriminator remains unchanged for odd values of $a$, thus proving that the discriminator for the sequence, $(\exs(n, c))_{n \geq 0} = \left(a \frac{(t^2)^{n + c} - 1}{2^b}\right)_{n \geq 0}$ is $D_{\exs} (n) = 2^{\lceil \log_2 n \rceil}$.
\end{proof}
\end{theorem}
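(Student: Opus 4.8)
The plan is to first establish the result for $a = 1$ by sandwiching $D_{\exs}(n)$ between matching upper and lower bounds, and then to bootstrap to general odd $a$ via Lemma~\ref{coprimescaledisc}. After dispensing with the trivial case $n = 1$ (where a single term forces $D_{\exs}(1) = 1 = 2^{\lceil \log_2 1 \rceil}$), I would fix $k \geq 0$ with $2^k < n \leq 2^{k+1}$, so that $\lceil \log_2 n \rceil = k + 1$, and aim to prove $D_{\exs}(n) = 2^{k+1}$.

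For the upper bound, the key observation is that $(\exs(n,c))_{n \geq 0}$ is literally a window of consecutive terms of the unshifted sequence starting at index $c$. Since $n \leq 2^{k+1}$, the first $n$ shifted terms form a set of at most $2^{k+1}$ consecutive terms, so Lemma~\ref{exupperbound} immediately gives that $2^{k+1}$ discriminates them. Hence $D_{\exs}(n) \leq 2^{k+1}$, and shift-invariance is automatic here because that lemma is stated for arbitrary consecutive windows.

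For the lower bound, I would take any $m$ with $1 \leq m < 2^{k+1}$ and manufacture a collision among the first $n$ terms. Applying Lemma~\ref{exlowerbound} yields indices $0 \leq i < j \leq 2^k$ with $(t^2)^i \equiv (t^2)^j \pmod{2^b m}$. Multiplying this congruence by $(t^2)^c$ shifts both exponents without disturbing the modulus, giving $(t^2)^{i+c} \equiv (t^2)^{j+c} \pmod{2^b m}$; this multiplicative step is exactly what makes the argument shift-invariant. Subtracting $1$ and then dividing through by $2^b$---legitimate because $t^2 \equiv 1 \pmod{2^b}$ by Lemma~\ref{tsqrmod2pow}, so each numerator is divisible by $2^b$---produces $\exs(i,c) \equiv \exs(j,c) \pmod{m}$. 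Since $i < j \leq 2^k < n$, both indices lie in the relevant window, so $m$ fails to discriminate. As this holds for every $m < 2^{k+1}$, we obtain $D_{\exs}(n) \geq 2^{k+1}$, and combining with the upper bound gives equality.

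Finally, for general odd $a$ the scaled sequence $\exs(n,c) = a \cdot \frac{(t^2)^{n+c}-1}{2^b}$ has discriminator value $2^{\lceil \log_2 n \rceil}$, which is a power of $2$ and hence coprime to the odd integer $a$; Lemma~\ref{coprimescaledisc} then transfers the $a=1$ result unchanged. The main obstacle I anticipate is not the arithmetic but arranging the lower-bound collision so that the colliding indices remain within the first $n$ terms; this is precisely why Lemma~\ref{exlowerbound} is engineered to guarantee $j \leq 2^k < n$, and why the $(t^2)^c$ multiplication is the right device to absorb the shift without enlarging the required index range.
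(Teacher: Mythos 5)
Your proposal is correct and follows essentially the same route as the paper's own proof: the same case split on $n$, the same use of Lemma~\ref{exupperbound} for the upper bound via consecutive windows, the same lower-bound collision built from Lemma~\ref{exlowerbound} by multiplying through by $(t^2)^c$ and dividing by $2^b$ (justified by Lemma~\ref{tsqrmod2pow}), and the same final appeal to Lemma~\ref{coprimescaledisc} for odd $a$. No gaps to report.
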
 

\section{Final remarks}

We have considered sequences of the form $(\ex(n))_{n \geq 0} = \left(a \frac{(t^2)^n - 1}{2^b}\right)_{n \geq 0}$ for odd integers $a$ and $t$, where $b$ is the smallest positive integer such that $t \not\equiv \modd{\pm 1} {2^b}$. We showed that the discriminator for this sequence is characterized by $D_{\ex} (n) = 2^{\lceil \log_2 n \rceil}$ and that the discriminator is shift-invariant, i.e., all sequences of the form $(\ex(n + c))_{n \geq 0}$ for $c \geq 0$ share the same discriminator.

This raises the question of what other sequences have shift-invariant
discriminators. 
It is easy to show that sequences defined by a linear equation, i.e. of the form $(an + b)_{n \geq 0}$, have shift-invariant discriminators. 
Furthermore, the first author has recently shown \cite{Haque17} that 
the sequence
$(2^k cn^2 +  bcn)_{n \geq 0}$, for a positive integer $k$ and odd integers
$b, c$, also has a shift-invariant discriminator $2^{\lceil \log_2 n \rceil}$. 

It is an open problem as to whether there are any sequences, other than those mentioned here, whose discriminators are shift-invariant. Futhermore,
all sequences whose discriminators are known to be shift-invariant have discriminators with linear growth, 
but we do not know if this is true of all shift-invariant discriminators.

\section{Acknowledgments}

We are grateful to Pieter Moree for introducing us to this interesting topic of discriminators. He also suggested the idea of generalizing $t$ to be any positive odd integer, thus broadening the class of exponential sequences presented in this paper.


\begin{thebibliography}{99}

\bibitem{ABM85}
L. K. Arnold, S. J. Benkoski, and B. J. McCabe.
\newblock The discriminator (a simple application of Bertrand's postulate).
\newblock {\it Amer. Math. Monthly} {\bf 92} (1985), 275--277.

\bibitem{Barcau88}
M. Barcau.
\newblock A sharp estimate of the discriminator.
\newblock {\it Nieuw. Arch. Wisk.} {\bf 6} (1988), 247--250.

\bibitem{BSW90}
P. S. Bremser, P. D. Schumer, and L. C. Washington.
\newblock A note on the incongruence of consecutive integers to a fixed power.
\newblock {\it J. Number Theory} {\bf 35} (1990), 105--108.

\bibitem{Haque17}
S. Haque.
\newblock Quadratic sequences with discriminator $p^{\lceil log_p n \rceil}$.
\newblock Manuscript in preparation, January 2017.

\bibitem{HaqueShallit16}
S. Haque and J. Shallit.
\newblock Discriminators and $k$-regular sequences.
\newblock {\it INTEGERS} {\bf 16} (2016), Paper A76.

\bibitem{Moree96}
P. Moree.
\newblock The incongruence of consecutive values of polynomials.
\newblock {\it Finite Fields Appl.} {\bf 2} (1996), 321--335.

\bibitem{MoreeMullen96}
P. Moree and G. L. Mullen.
\newblock Dickson polynomial discriminators.
\newblock {\it J. Number Theory} {\bf 59} (1996), 88--105.

\bibitem{MoreeRoskam95}
P. Moree and H. Roskam.
\newblock On an arithmetical function related to Euler's 
totient and the discriminator.
\newblock {\it Fibonacci Quart.} {\bf 33} (1995), 332--340.

\bibitem{MoreeZumalacarregui16}
P. Moree and A. Zumalac\'arregui.
\newblock Salajan's conjecture on discriminating terms
in an exponential sequence.
\newblock {\it J. Number Theory} {\bf 160} (2016), 646--665.

\bibitem{Schumer90}
P. Schumer.
\newblock On the incongruence of consecutive cubes.
\newblock {\it Math. Student} {\bf 58} (1990), 42--48.

\bibitem{SchumerSteinig88}
P. Schumer and J. Steinig.
\newblock On the incongruence of consecutive fourth powers.
\newblock {\it Elem. Math.} {\bf 43} (1988), 145--149.

\bibitem{Sun13}
Zhi-Wei Sun.
\newblock On functions taking only prime values.
\newblock {\it J. Number Theory} {\bf 133} (2013), 2794--2812.

\bibitem{Zieve98}
M. Zieve.
\newblock A note on the discriminator.
\newblock {\it J. Number Theory}  {\bf 73} (1998), 122--138.

\end{thebibliography}
\end{document}